\edef\csname amssym.def\endcsname{%
       \catcode`\noexpand\@=\the\catcode`\@\space}
\def\undefine#1{\let#1\undefined}
\def\newsymbol#1#2#3#4#5{\let\next@\relax
 \ifnum#2=\@ne\let\next@\msafam@\else
 \ifnum#2=\tw@\let\next@\msbfam@\fi\fi
 \mathchardef#1="#3\next@#4#5}
\def\mathhexbox@#1#2#3{\relax
 \ifmmode\mathpalette{}{\m@th\mathchar"#1#2#3}%
 \else\leavevmode\hbox{$\m@th\mathchar"#1#2#3$}\fi}
\def\hexnumber@#1{\ifcase#1 0\or 1\or 2\or 3\or 4\or 5\or 6\or 7\or 8\or
 9\or A\or B\or C\or D\or E\or F\fi}
\font\tenmsa=msam10 at 16pt\relax
\font\sevenmsa=msam7 at 12pt\relax
\font\fivemsa=msam5 at 9pt\relax
\edef\msafam@{\hexnumber@\msafam}
\mathchardef\dabar@"0\msafam@39
\def\dashrightarrow{\mathrel{\dabar@\dabar@\mathchar"0\msafam@4B}}
\def\dashleftarrow{\mathrel{\mathchar"0\msafam@4C\dabar@\dabar@}}
\def\ulcorner{\delimiter"4\msafam@70\msafam@70 }
\def\urcorner{\delimiter"5\msafam@71\msafam@71 }
\def\llcorner{\delimiter"4\msafam@78\msafam@78 }
\def\lrcorner{\delimiter"5\msafam@79\msafam@79 }
\def\yen{{\mathhexbox@\msafam@55}}
\def\checkmark{{\mathhexbox@\msafam@58}}
\def\circledR{{\mathhexbox@\msafam@72}}
\def\maltese{{\mathhexbox@\msafam@7A}}
\font\tenmsb=msbm10 at 16pt\relax
\font\sevenmsb=msbm7 at 12pt\relax
\font\fivemsb=msbm5 at 9pt\relax
\edef\msbfam@{\hexnumber@\msbfam}
\def\Bbb#1{{\fam\msbfam\relax#1}}
\def\widehat#1{\setbox\z@\hbox{$\m@th#1$}%
 \ifdim\wd\z@>\tw@ em\mathaccent"0\msbfam@5B{#1}%
 \else\mathaccent"0362{#1}\fi}
\def\widetilde#1{\setbox\z@\hbox{$\m@th#1$}%
 \ifdim\wd\z@>\tw@ em\mathaccent"0\msbfam@5D{#1}%
 \else\mathaccent"0365{#1}\fi}
\font\teneufm=eufm10 at 16pt\relax
\font\seveneufm=eufm7 at 12pt\relax
\font\fiveeufm=eufm5 at 9pt\relax
\newtheorem{proposition}{Proposition}
\def\E{\operatorname{\bf E}}
\def\1{{\bf 1}}
\newcommand{\R}{I \! \! R}
\def\C{\Bbb C}
\def\Q{\Bbb Q}
\def\beq{\begin{equation}}
\def\eeq{\end{equation}}
\renewcommand{\Re}{\operatorname{\rm Re}}
\def\process#1{#1=\{#1_t\}_{t\geq 0}}
\newtheorem{theorem}{Theorem}[section]
\newtheorem{remark}{Remark}[section]
\newtheorem{lem}{Lemma}[section]
\newtheorem{corollary}{Corollary}[section]
\newtheorem{assumption}{Assumption}
\begin{document}
\title{Skewness Premium with L\'{e}vy Processes\thanks{We thank seminar participants at École Polytechnique, HEC Paris,
Universit\"{a}t de Barcelona, IMPA, PUC-Rio, Universit\"{a}t de
Navarra, Séminaire Groupe Parisien Bachelier. The ususal disclaimer
applies}}
\author{Jos\'{e} Fajardo\thanks{IBMEC Business School, Rio de
Janeiro - Brazil, e-mail: pepe@ibmecrj.br}\hspace{0.2cm}  and
 Ernesto Mordecki\thanks
{Centro de Matem\'atica, Facultad de Ciencias, Universidad de la
Rep\'ublica, Montevideo. Uruguay. e-mail: mordecki@cmat.edu.uy.}}
\date{\today }

\maketitle
\begin{center}
First draft 05/10/06
\end{center}
\begin{abstract}
We study the skewness premium (SK) introduced by
\citeasnoun{Bates91} in a general context using L\'evy Processes.
Under a symmetry condition \citeasnoun{FajardoMordecki2006b}
obtain that SK is given by the Bate's $x\%$ rule. In this paper we
study SK under the absence of that symmetry condition. More
exactly, we derive sufficient conditions for SK to be positive, in
terms of the characteristic triplet of the Lévy Process under the
risk neutral measure. \vspace{3 mm }
\newline {\bf Keywords:} Skewnes Premium; Lévy Processes.
\\
{\bf JEL Classification:} C52; G10
\end{abstract}
\section{Introduction}

The stylized facts of option prices have been studied by many
authors in the literature. An important fact from option prices is
that relative prices of out-of-the-money calls and puts can be
used as a measure of symmetry or skewness of the risk neutral
distribution. \citeasnoun{Bates91}, called this diagnosis
``skewness premium'', henceforth SK. He analyzed the behavior of
SK using three classes of stochastic processes: Constant
Elasticity of Variance (CEV), Stochastic Volatility and
Jump-diffusion. He found conditions on the parameters for the SK
be positive or
negative.\\

But, as many models in the literature have shown, the behavior of
the assets underlying options is very complex, the structure of
jumps observed is more complex than Poisson jumps. They have
higher intensity, see for example \citeasnoun{Yacine2004}. For
that reason diffusion models cannot consider the discontinuous
sudden movements observed on asset prices. In that sense, the use
of more general process as L\'evy processes have shown to provide
a better fit with real data, as was reported in
\citeasnoun{CarrWu04} and \citeasnoun{EberleinKellerPrause98}. On
the other hand,
 the mathematical tools behind these processes are very well
established and known.  \\

When the underlying follows a Geometric Lévy Process,
\citeasnoun{FajardoMordecki2006b} obtained a relationship between
calls and puts, that they called \emph{Put-Call duality} and
obtain as a particular case the \emph{Put-call symmetry}, and
obtain that SK is given by the Bate's $x\%$ rule. The Put-Call duality has important applications, in particularly the Put-Call symmetry, as \citeasnoun{BC94} and \citeasnoun{CEG98} show using symmetry we can construct stactic hedges for exotic options..\\

 In this paper we study the SK under absence of symmetry and
obtain sufficient conditions for the excess of SK be positive or
negative. The main idea behind the proofs is to exploit the
monotonicity property of option prices with respect to some
parameter of the Lévy measure. This monotonicity is not an easy task, monotonicity with respect to the intensity parameter of
the jump have been recently address by \citeasnoun{EkstromTysk05}, while the monotonicity with respect
to the symmetry parameter have not been totally addressed in previous works.
A particular answer is given for the case of GH distributions in
\citeasnoun{Bergenthum07}.\\

 The paper is organized as
follows: in Section 2 we introduce the L\'evy processes and  we
present the duality results. In Section 3 we discuss market
symmetry and present our main results. In Section 4 we study the
skewness premium. Section 5 discuss monotonicity with respect to
the symmetry parameter and Section 6 concludes.

\section{Lévy processes and Duality}
Consider a real valued stochastic process $\process X$, defined on
a stochastic basis ${\cal B}=(\Omega, {\cal F},{\bf F}=({\cal
F}_t)_{t\geq 0}, \Q)$, being c\`adl\`ag, adapted, satisfying
$X_0=0$, and such that for $0\leq s< t$ the random variable
$X_t-X_s$ is independent of the $\sigma$-field ${\cal F}_s$, with
a distribution that only depends on the difference $t-s$. Assume
also that the stochastic basis ${\cal B}$ satisfies the usual
conditions (see \citeasnoun{jacodShiryaev87}). The process $X$ is
a L\'evy process, and is also called a process with stationary
independent increments (PIIS). For general reference on L\'evy
processes see \citeasnoun{jacodShiryaev87},
\citeasnoun{Skorokhod91}, \citeasnoun{Bertoin96},
\citeasnoun{Sato99}. For L\'evy process in Finance see \citeasnoun{BL02}, \citeasnoun{Schoutens2003} and \citeasnoun{CT04}.\\

In order to characterize the law of $X$ under $\Q$, consider, for
$q\in\R$ the L\'evy-Khinchine formula, that states
\begin{equation}\label{e:lk}
\E e^{iq X_t}= \exp\Big\{t\Big[iaq-\frac 12\sigma^2q^2+
\int_{\R}\big(e^{iq y}-1-iq h(y)\big)\Pi(dy)\Big]\Big\},
\end{equation}
with
\begin{equation*}
h(y)=y{\bf 1}_{\{|y|<1\}}
\end{equation*}
a fixed truncation function, $a$ and $\sigma\geq 0$ real
constants, and $\Pi$ a positive measure on ${\R}\setminus\{0\}$
such that $\int (1\wedge y^2)\Pi(dy)<+\infty$, called the
\emph{L\'evy measure}. The triplet $(a,\sigma^2,\Pi)$ is the
\emph{characteristic triplet} of the process, and completely
determines its law.\\

Consider the set
\begin{equation}\label{e:cecero}
{\C_0}=\Big\{z=p+iq\in\C\colon
\int_{\{|y|>1\}}e^{py}\Pi(dy)<\infty\Big\}.
\end{equation}
The set ${\C_0}$ is a vertical strip in the complex plane,
contains the line $z=iq\ (q\in\R)$, and consists of all complex
numbers $z=p+iq$  such that $\E e^{pX_t} <\infty$ for some $t>0$.
Furthermore, if $z\in{\C_0}$, we can define the
\emph{characteristic exponent} of the process $X$, by
\begin{equation}\label{e:char-exp}
\psi(z)=az+\frac 12\sigma^2z^2+
\int_{\R}\big(e^{zy}-1-zh(y)\big)\Pi(dy)
\end{equation}
this function $\psi$ is also called the {\it cumulant} of $X$,
having $\E|e^{zX_t}|<\infty$ for all $t\geq 0$, and $\E
e^{zX_t}=e^{t\psi(z)}$. The finiteness of this expectations
follows from Theorem 21.3 in \citeasnoun{Sato99}. Formula
\eqref{e:char-exp} reduces to formula \eqref{e:lk} when
$\Re(z)=0$.
\subsection{L\'evy market}
By a \emph{L\'evy market} we mean a
model of a financial market with two assets: a deterministic
savings account $\process B$, with
\begin{equation*}
B_t=e^{rt},\qquad r\ge 0,
\end{equation*}
where we take $B_0=1$ for simplicity, and a stock $\process S$,
with random evolution modelled by
\begin{equation}\label{e:market}
S_t=S_0e^{X_t},\qquad S_0=e^x>0,
\end{equation}
where $\process X$ is a L\'evy process.\\

In this model we assume that the stock pays dividends, with
constant rate $\delta\ge 0$, and that the given probability
measure $\Q$ is the chosen equivalent martingale measure. In other
words, prices are computed as expectations with respect to
$\Q$, and the discounted and reinvested process $\{e^{-(r-\delta)t}S_t\}$ is a $\Q$-martingale.\\

In terms of the characteristic exponent of the process this means
that

\begin{equation}\label{e:psiuno}
\psi(1)=r-\delta,
\end{equation}
based on the fact, that $\E
e^{-(r-\delta)t+X_t}=e^{-t(r-\delta+\psi(1))}=1$, and condition
\eqref{e:psiuno} can also be formulated in terms of the
characteristic triplet of the process $X$ as
\begin{equation}\label{e:a}
a=r-\delta-\sigma^2/2-\int_{\R}\big(e^y-1-h(y)\big)\Pi(dy).
\end{equation}
In the case, when
\begin{equation}\label{e:bsm}
X_t=\sigma W_t + at \quad (t\ge 0),
\end{equation}
where $\process W$ is a Wiener process, we obtain the
Black--Scholes--Merton (1973) model (see
\citeasnoun{BlackScholes73},\citeasnoun{Merton73}).\\

In the market model considered we introduce some derivative
assets. More precisely, we consider call and put options, of both
European and American types. Denote by ${\cal M}_T$ the class of
stopping times up to a fixed constant time $T$, i.e:
$${\cal M}_T=\{\tau:\;0\leq \tau \leq T,\;\tau \hbox{ stopping time w.r.t ${\bf F}$}\}.$$
 Then, for each stopping time $\tau\in{\cal M}_T$ we introduce

\begin{align}
c(S_0,K,r,\delta,\tau,\psi)&=\E e^{-r\tau}(S_{\tau}-K)^+,\label{e:call}\\
p(S_0,K,r,\delta,\tau,\psi)&=\E
e^{-r\tau}(K-S_{\tau})^+.\label{e:put}
\end{align}
In our analysis \eqref{e:call} and \eqref{e:put} are auxiliary
quantities, anyhow, they are interesting by themselves as random
maturity options, as considered, for instance, in
\citeasnoun{Schroder99} and \citeasnoun{Detemple01}. If $\tau=T$,
formulas \eqref{e:call} and \eqref{e:put} give the price of the
European call and put options respectively.
\subsection{Put Call duality and dual markets}
\begin{lem}[Duality]\label{p:duality}
Consider a L\'evy market with driving process $X$ with
characteristic exponent $\psi(z)$, defined in \eqref{e:char-exp},
on the set $\C_0$ in \eqref{e:cecero}. Then, for the expectations
introduced in \eqref{e:call} and \eqref{e:put} we have
\begin{equation}\label{e:duality}
c(S_0,K,r,\delta,\tau,\psi)=p(K,S_0,\delta,r,\tau,\tilde{\psi}),
\end{equation}
where
\begin{equation}\label{e:psitilde}
\tilde{\psi}(z)=\tilde{a}z+\frac 12\tilde{\sigma}^2z^2+
\int_{\R}\big(e^{zy}-1-zh(y)\big)\tilde{\Pi}(dy)
\end{equation}
is the characteristic exponent (of a certain L\'evy process) that
satisfies
\begin{equation*}
\tilde{\psi}(z)=\psi(1-z)-\psi(1), \quad \text{for $1-z\in\C_0$},
\end{equation*}
and in consequence,
\begin{equation}\label{e:tripletilde}
\begin{cases}
\tilde{a}        &=\delta-r-\sigma^2/2-\int_{\R}\big(e^y-1-h(y)\big)\tilde{\Pi}(dy),\\
\tilde{\sigma}        &=\sigma,\\
\tilde{\Pi}(dy)&=e^{-y}\Pi(-dy).\\
\end{cases}
\end{equation}
\end{lem}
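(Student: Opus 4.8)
The plan is to establish the duality \eqref{e:duality} by a direct change of measure, transforming the expectation defining the call into one defining a put. First I would introduce the measure $\tilde\Q$ on each $\sigma$-field $\Cal F_t$ via the density
\[
\left.\frac{d\tilde\Q}{d\Q}\right|_{\Cal F_t}=e^{-(r-\delta)t}\frac{S_t}{S_0}=e^{-(r-\delta)t+X_t},
\]
which is a genuine $\Q$-martingale of mean one precisely because of the martingale condition \eqref{e:psiuno}. Writing $(S_\tau-K)^+=S_\tau(1-K/S_\tau)^+$ and discounting, the call price becomes
\[
c(S_0,K,r,\delta,\tau,\psi)=\E e^{-r\tau}S_\tau\bigl(1-K/S_\tau\bigr)^+
=S_0\,\E_{\tilde\Q}\,e^{-\delta\tau}\bigl(1-K/S_\tau\bigr)^+,
\]
where the last step uses the optional stopping / Bayes rule for the density above at the bounded stopping time $\tau\le T$. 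Setting $\tilde S_\tau:=S_0^2/S_\tau=S_0 e^{-X_\tau}$, the integrand is $e^{-\delta\tau}(K/S_0)\bigl(S_0-\tilde S_\tau\bigr)^+/\,$— more carefully, $S_0(1-K/S_\tau)^+=(K/S_0)(S_0\cdot S_0/K\cdot(1-K/S_\tau))^+$; the clean bookkeeping is $S_0(1-K/S_\tau)^+ = (1/K)\,(K\tilde S_\tau/S_0 \cdot(\,\cdot\,))$, so I would simply observe $S_0(1-K/S_\tau)^+=(S_0/K)\bigl(K - K^2/S_\tau\bigr)^+$ is not quite it either; the correct identity is $S_\tau(1-K/S_\tau)^+=(S_\tau-K)^+$ and after the measure change one rewrites $S_0(1-K/S_\tau)^+$ directly as $\tfrac{1}{1}\,(S_0-\tilde S_\tau\,\tfrac{K}{S_0}\cdot\tfrac{S_0}{\tilde S_\tau}\cdots)$. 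The honest route is: under $\tilde\Q$, $c = S_0\E_{\tilde\Q}e^{-\delta\tau}(1-Ke^{-X_\tau}/S_0)^+ = \E_{\tilde\Q}e^{-\delta\tau}(S_0 - K e^{-X_\tau})^+\cdot$ — but $S_0-Ke^{-X_\tau}$ has the wrong homogeneity; instead multiply inside by $e^{X_\tau}$... I will present the slick version: $c=\E_{\tilde\Q}e^{-\delta\tau}\bigl(K\,\tfrac{S_0}{K}-K\,\tfrac{S_0}{K}\cdot\tfrac{K}{S_\tau}\bigr)^+$, i.e. with $\hat S_\tau := S_0 K / S_\tau = K e^{x-X_\tau}$ one gets $c(S_0,K,r,\delta,\tau,\psi)=\E_{\tilde\Q}e^{-\delta\tau}(K-\hat S_\tau)^+ = p(K,S_0,\delta,r,\tau,\tilde\psi)$, provided $\hat S_\tau = K e^{\tilde X_\tau}$ with $\tilde S_0 = K$.

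The second step is therefore to identify the law of $\tilde X := -X$ under $\tilde\Q$ as a L\'evy process with the claimed characteristic exponent. By the Esscher-type change of measure, for $z$ with $\Re(z)=0$,
\[
\E_{\tilde\Q}e^{z(-X_t)} = \E_{\Q}\,e^{-(r-\delta)t+X_t}e^{-zX_t}
= e^{-(r-\delta)t}\,\E_\Q e^{(1-z)X_t}
= e^{t[\psi(1-z)-\psi(1)]},
\]
valid whenever $1-z\in\C_0$, using \eqref{e:psiuno} and $\E e^{wX_t}=e^{t\psi(w)}$. This shows $\tilde X$ is a L\'evy process under $\tilde\Q$ with cumulant $\tilde\psi(z)=\psi(1-z)-\psi(1)$, so the price on the right-hand side is exactly a put in the market $\tilde S_t = Ke^{\tilde X_t}$ with interest rate $\delta$ and dividend $r$, i.e. $p(K,S_0,\delta,r,\tau,\tilde\psi)$. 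One must check the bookkeeping of spot versus strike: the original spot $S_0=e^x$ plays the role of the new strike, and the original strike $K$ is the new spot, matching the argument order in \eqref{e:duality}.

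The third step is to extract the characteristic triplet \eqref{e:tripletilde} from $\tilde\psi(z)=\psi(1-z)-\psi(1)$. Substituting \eqref{e:char-exp} and expanding $\psi(1-z)$, the $z^2$ coefficient gives $\tilde\sigma^2=\sigma^2$ immediately; the jump part gives $\int(e^{(1-z)y}-\cdots)\Pi(dy)$, and the substitution $y\mapsto -y$ together with the factor $e^{y}$ (from $e^{(1-z)y}=e^y e^{-zy}$) yields $\tilde\Pi(dy)=e^{-y}\Pi(-dy)$; collecting the remaining linear-in-$z$ terms and re-centering with respect to the truncation function $h$ produces the stated formula for $\tilde a$, where one uses \eqref{e:a} for $a$ and its analogue for $\tilde a$. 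The main obstacle I anticipate is not any single computation but getting the spot/strike/rate/dividend substitution consistent throughout — in particular justifying the interchange of the density change with the stopping time $\tau$ (which is clean since $\tau\le T$ is bounded and the density is a uniformly integrable martingale on $[0,T]$), and correctly tracking the truncation-function correction terms so that $\tilde a$ comes out exactly as in \eqref{e:tripletilde} rather than off by the $\int(e^y-1-h(y))$ term. I would therefore devote the most care to the algebra converting $\psi(1-z)-\psi(1)$ into canonical L\'evy-Khinchine form, and to verifying at the end that $\tilde\psi(1)=\psi(0)-\psi(1)=-\psi(1)=\delta-r$, the correct martingale condition for the dual market.
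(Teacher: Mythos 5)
Your argument is correct and is the standard change-of-numeraire proof: the paper itself gives no proof of this lemma, deferring entirely to the cited reference \citeasnoun{FajardoMordecki2006b}, and the density $e^{-(r-\delta)t+X_t}$, the Esscher identification $\tilde\psi(z)=\psi(1-z)-\psi(1)$, and the triplet extraction are exactly the ingredients of that reference's proof, so you have supplied what the paper omits rather than diverged from it. One bookkeeping slip worth cleaning up: since $S_\tau=S_0e^{X_\tau}$, the dual underlying is $\hat S_\tau=S_0K/S_\tau=Ke^{-X_\tau}$ (not $Ke^{x-X_\tau}$), and the payoff after the measure change is $(S_0-\hat S_\tau)^+$, i.e.\ the \emph{original spot} is the new strike — your displayed $(K-\hat S_\tau)^+$ has the roles reversed, although your closing sentence of that paragraph states the correct assignment, so this is a typo rather than a gap. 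The meandering middle of the first paragraph should simply be replaced by the one-line identity $S_0(1-K/S_\tau)^+=(S_0-Ke^{-X_\tau})^+$.
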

\begin{proof}
See \citeasnoun{FajardoMordecki2006b}.
\end{proof}

The above Duality Lemma motivates us to introduce the following
market model. Given a L\'evy market with driving process
characterized by $\psi$ in \eqref{e:char-exp}, consider a market
model with two assets, a deterministic savings account
$\process{\tilde B}$, given by
\begin{equation*}
{\tilde B}_t=e^{\delta t},\qquad \delta\ge 0,
\end{equation*}
and a stock $\process{\tilde S}$, modelled by
\begin{equation*}
\tilde{S}_t=Ke^{\tilde{X}_t},\qquad\tilde{S}_0=K>0,
\end{equation*}
where $\process{\tilde{X}}$ is a L\'evy process with
characteristic exponent under $\tilde{\Q}$ given by $\tilde{\psi}$
in \eqref{e:psitilde}. The process $\tilde{S}_t$ represents the
price of $KS_0$ dollars measured in units of stock $S$. This
market is the \emph{auxiliary market} in \citeasnoun{Detemple01},
and we call it \emph{dual market}; accordingly, we call
\emph{Put--Call duality} the relation \eqref{e:duality}. It must
be noticed that \citeasnoun{PeskirShiryaev2001} propose the same
denomination for a different relation. Finally observe, that in
the dual market (i.e. with respect to $\tilde{\Q}$), the process
$\{e^{-(\delta-r)t}{\tilde S}_t\}$ is a martingale. As a
consequence, we obtain the Put--Call symmetry in the
Black--Scholes--Merton model: In this case $\Pi=0$, we have no
jumps, and the characteristic exponents are

\begin{align*}
\psi(z)&=(r-\delta-\sigma^2/2)z+\sigma^2z^2/2,\\
\tilde{\psi}(z)&=(\delta-r-\sigma^2/2)z+\sigma^2z^2/2.
\end{align*}
and relation \eqref{e:duality} is the result known as put--call
symmetry. In the presence of jumps like the jump-diffusion model
of \citeasnoun{Merton76}, if the jump returns of $S$ under $\Q$
and $\tilde{S}$ under $\tilde{\Q}$ have the same distribution, the
Duality Lemma, implies that by exchanging the roles of $\delta$ by
$r$  and $K$ by $S_0$ in (\ref{e:duality}) and
(\ref{e:tripletilde}), we can obtain an American call price
formula from the American put price formula. Motivated by this
analysis we introduce the definition of symmetric markets in the
following section.

\section{Market Symmetry}
It is interesting to note that in a market with no jumps (i.e. in
the Black-Scholes model), the distribution of the discounted and
reinvested stock both in the given risk neutral and in the dual
L\'evy market, taking equal initial values, coincide. It is then
natural to define a L\'evy market to be \emph{symmetric} when this
relation hold, i.e. when

\begin{equation}\label{e:symmetry}
{\cal L}\big(e^{-(r-\delta)t+X_t}\mid \Q\big)={\cal
L}\big(e^{-(\delta-r)t-X_t}\mid\tilde{\Q}\big),
\end{equation}

meaning equality in law. Otherwise we call trhe Lévy market
\emph{Assymetric}. In view of \eqref{e:tripletilde}, and due to
the fact that the characteristic triplet determines the law of a
L\'evy processes, we obtain that a necessary and sufficient
condition for \eqref{e:symmetry} to hold is

\begin{equation}\label{e:pisymmetric}
\Pi(dy)=e^{-y}\Pi(-dy).
\end{equation}
This ensures $\tilde{\Pi}=\Pi$, and from this follows
$a-(r-\delta)=\tilde{a}-(\delta-r)$, giving \eqref{e:symmetry}, as
we always have $\tilde{\sigma}=\sigma$. As pointed out by
\citeasnoun{FajardoMordecki2006b} condition \eqref{e:pisymmetric}
answers a question raised \citeasnoun{CarrChesney96}. With this
condition in mind we can obtain the following result.
\begin{corollary}[Bates' $x$ \% Rule]
 Take $r=\delta$ and
assume (\ref{e:pisymmetric}) holds, we have
\begin{equation}\label{e:duality4}
c(F_0,K_c,r,\tau,\psi)=(1+x) \;p(F_0,K_p,r,\tau,\psi),
\end{equation}
where $K_c=(1+x)F_0$ and $K_p=F_0/(1+x)$, with $x>0$.
\end{corollary}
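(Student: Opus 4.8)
The plan is to derive the Bates $x\%$ rule directly from the Duality Lemma by feeding it the right parameters. The key observation is that when $r=\delta$, the forward price of the stock equals its spot price, $F_0=S_0$, so we may freely write $S_0$ or $F_0$ in the pricing functionals; this is why the corollary can be stated with $F_0$ even though the underlying results are phrased in terms of $S_0$. I would start by writing out Put--Call duality \eqref{e:duality} with the substitution $r=\delta$:
\begin{equation*}
c(S_0,K,r,r,\tau,\psi)=p(K,S_0,r,r,\tau,\tilde{\psi}).
\end{equation*}
Under hypothesis \eqref{e:pisymmetric} the market is symmetric, so by the discussion following \eqref{e:pisymmetric} we have $\tilde{\Pi}=\Pi$ and, since $r=\delta$ forces $\tilde a=a$ as well (the extra term $\pm(r-\delta)$ vanishes) and always $\tilde\sigma=\sigma$, the dual characteristic exponent coincides with the original one: $\tilde\psi=\psi$. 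Hence the duality relation collapses to the \emph{symmetry} identity
\begin{equation*}
c(S_0,K,r,r,\tau,\psi)=p(K,S_0,r,r,\tau,\psi).
\end{equation*}

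Next I would exploit the scaling (homogeneity) of the pricing functionals in the pair (spot, strike). Since $S_\tau=S_0e^{X_\tau}$ and the payoffs $(S_\tau-K)^+$ and $(K-S_\tau)^+$ are positively homogeneous of degree one in $(S_0,K)$ jointly, both $c$ and $p$ satisfy $c(\lambda S_0,\lambda K,\dots)=\lambda\, c(S_0,K,\dots)$ and likewise for $p$, for any $\lambda>0$. Now apply the symmetry identity with spot $F_0$ and strike $K_c=(1+x)F_0$:
\begin{equation*}
c(F_0,(1+x)F_0,r,r,\tau,\psi)=p((1+x)F_0,F_0,r,r,\tau,\psi).
\end{equation*}
On the right-hand side, factor out $\lambda=(1+x)$ from both arguments, writing $(1+x)F_0=(1+x)F_0$ and $F_0=(1+x)\cdot F_0/(1+x)$:
\begin{equation*}
p((1+x)F_0,F_0,r,r,\tau,\psi)=(1+x)\,p\!\left(F_0,\tfrac{F_0}{1+x},r,r,\tau,\psi\right)=(1+x)\,p(F_0,K_p,r,r,\tau,\psi),
\end{equation*}
with $K_p=F_0/(1+x)$. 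Combining the last two displays gives exactly \eqref{e:duality4}.

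The only genuinely delicate point is the reduction $\tilde\psi=\psi$: one must check that the drift terms match, i.e. that \eqref{e:tripletilde} with $\tilde\Pi=\Pi$ and $r=\delta$ yields $\tilde a=a$, using \eqref{e:a}. This is a one-line computation — both $a$ and $\tilde a$ equal $-\sigma^2/2-\int(e^y-1-h(y))\Pi(dy)$ when $r=\delta$ — but it is the hinge of the argument, since without symmetry \eqref{e:pisymmetric} the dual process is genuinely different and the identity fails. Everything else is bookkeeping: the homogeneity of $c$ and $p$ is immediate from their definitions \eqref{e:call}--\eqref{e:put}, and the identification $F_0=S_0$ under $r=\delta$ is standard. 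I would also remark that the same argument, run with a general $\tilde\psi$, reproduces the asymmetric version in which the factor $(1+x)$ is replaced by a ratio of a call price under $\psi$ and a put price under $\tilde\psi$, which is presumably the starting point for the main results of Sections 4 and 5.
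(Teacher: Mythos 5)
Your proposal is correct and follows essentially the same route as the paper, which simply states that the result ``follows directly from Lemma 2.1 since $r=\delta$ and $\psi=\tilde\psi$.'' You usefully fill in the two steps the paper leaves implicit --- the verification via \eqref{e:tripletilde} and \eqref{e:a} that $\tilde a=a$ when $\tilde\Pi=\Pi$ and $r=\delta$, and the degree-one homogeneity of the put price in (spot, strike) that produces the factor $(1+x)$ --- both of which are exactly what the paper's one-line proof presupposes.
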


\begin{proof}
Follows directly from Lemma 2.1. Since $r=\delta$ and
$\psi=\tilde{\psi}$.
\end{proof}
From here calls and puts at-the-money ($x=0$) should have the same
price. As we mention this $x\%-$rule, in the context of Merton's
model was obtained by \citeasnoun{Bates97}. That is, if the call
and put options have strike prices $x\%$ out-of-the money relative
to the forward price, then the call should be priced $x\%$ higher
than
the put.\\

\subsection{Empirical Evidence of Symmetry}
In \citeasnoun{FajardoMordecki2006b} several concrete models
proposed in the literature are reviewed. More exactly, L\'evy
markets with jump measure of the form

\begin{equation}\label{e:beta}
\Pi(dy)=e^{\beta y}\Pi_0(dy),
\end{equation}
where $\Pi_0(dy)$ is a symmetric measure, i.e.
$\Pi_0(dy)=\Pi_0(-dy)$, everything with respect to the risk
neutral measure $\Q$.\\



As a consequence of \eqref{e:pisymmetric},
\citeasnoun{FajardoMordecki2006b} found that the market is
symmetric if and only if $\beta=-1/2$. Then, as we have seen when
the market is symmetric, the skewness premium
is obtained using the $x\%-$rule. \\

Although from the theoretical point of view the assumption
\eqref{e:beta} is a real restriction, most models in practice
share this property, and furthermore, they have a jump measure
that has a Radon-Nikodym density. In this case, we have
\begin{equation}\label{e:betadensity}
\Pi(dy)=e^{\beta y}p(y)dy,
\end{equation}
where $p(y)=p(-y)$, i.e. the function $p(y)$ is even. More
precisely, all parametric models that we found in the literature,
in what concerns L\'evy markets, including diffusions with jumps,
can be re\-pa\-ra\-me\-tri\-zed in the form \eqref{e:betadensity}:
The Generalized Hyperbolic model proposed by
\citeasnoun{EberleinPrause2000}, The Meixner model proposed by
\citeasnoun{Scoutens2001} and The CGMY model proposed by
\citeasnoun{CGMY02}. Recently, \citeasnoun{Fajardomordecki2007} shows that under some conditions the Time Changed Brownian motion with drift is also included in this class. Then, they show
that the resulting processes will satisfy the above symmetry if and only if the drift equal -1/2.\\

Using the risk neutral market measure and the Esscher transform
measure as EMM, \citeasnoun{FajardoMordecki2006b} obtain evidence
that empirical risk-neutral markets are not symmetric. Then, the
question naturally arises: How to obtain a Put-call symmetry,
under absence of symmetry? In what follows we try to answer this question. \\

Henceforth take $r=\delta$. We need the following assumption
\begin{assumption}
Option prices are monotonic with respect to the symmetry parameter
$\beta$.
\end{assumption}
Our main result is stated as follows.
\begin{theorem}{\label{bergtum}}
Consider Lévy measures given by (\ref{e:beta}). Under Assumption
1, if $\beta\gtrless-1/2$ then
\begin{equation}\label{e:duality41}
C(F_0,K_c,r,\tau,\psi)\gtrless(1+x) \;P(F_0,K_p,r,\tau,\psi),
\end{equation}
where $K_c=(1+x)F_0$ and $K_p=F_0/(1+x)$, with $x>0$.
\end{theorem}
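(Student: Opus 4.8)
The plan is to reduce the chain of inequalities \eqref{e:duality41} to a single monotonicity statement about put prices, by combining Put--Call duality (Lemma~\ref{p:duality}) with the scaling property of option prices, and only then to invoke Assumption~1. Throughout I keep $r=\delta$ fixed, so that the forward price is $F_0=S_0$, and I write, for a market of type \eqref{e:beta} with symmetry parameter $\beta$, $C_\beta(S_0,K)=c(S_0,K,r,r,\tau,\psi_\beta)$ and $P_\beta(S_0,K)=p(S_0,K,r,r,\tau,\psi_\beta)$, all other ingredients ($\sigma$, the symmetric measure $\Pi_0$, $r=\delta$, $\tau$) held fixed and the drift $a$ pinned down by the martingale condition \eqref{e:a}; everything below is stated for fixed $\tau$, hence for European options, and extends to American options by taking the supremum over $\tau\in\mathcal M_T$.

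The first step is to notice that the dual of a market of type \eqref{e:beta} is again of type \eqref{e:beta}. Indeed, by \eqref{e:tripletilde},
\[
\tilde\Pi(dy)=e^{-y}\Pi(-dy)=e^{-(1+\beta)y}\Pi_0(dy),
\]
so the dual symmetry parameter is $\tilde\beta=-1-\beta$; and since $r=\delta$, the dual drift $\tilde a$ prescribed by \eqref{e:tripletilde} is exactly the value that \eqref{e:a} forces for the L\'evy measure $\tilde\Pi$, whence $\tilde\psi=\psi_{-1-\beta}$. Lemma~\ref{p:duality} then gives
\[
C_\beta(F_0,K_c)=c(F_0,K_c,r,r,\tau,\psi_\beta)=p(K_c,F_0,r,r,\tau,\psi_{-1-\beta})=P_{-1-\beta}(K_c,F_0).
\]
Next I would use that the put prices defined in \eqref{e:put} are positively homogeneous of degree one in the pair (spot, strike), since $S_\tau=S_0e^{X_\tau}$. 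Applying this with factor $1/(1+x)$ and using $K_c=(1+x)F_0$ and $F_0/(1+x)=K_p$ yields $P_{-1-\beta}(K_c,F_0)=(1+x)\,P_{-1-\beta}(F_0,K_p)$, and therefore
\[
C_\beta(F_0,K_c)-(1+x)\,P_\beta(F_0,K_p)=(1+x)\bigl[\,P_{-1-\beta}(F_0,K_p)-P_\beta(F_0,K_p)\,\bigr].
\]
At $\beta=-1/2$ the right-hand side vanishes, which recovers the Bates' $x\%$ Rule of the Corollary, and also shows the content of the theorem is the \emph{off-symmetry} behaviour of this difference.

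To finish, observe that $\beta\mapsto-1-\beta$ is the reflection about $-1/2$: one has $-1-\beta<-1/2<\beta$ when $\beta>-1/2$, and $-1-\beta>-1/2>\beta$ when $\beta<-1/2$. By Assumption~1, $\beta\mapsto P_\beta(F_0,K_p)$ is monotone; I would pin the direction down as \emph{decreasing} — the instance of Assumption~1 consistent with the empirical and GH analysis of Section~5 — so that $P_{-1-\beta}(F_0,K_p)-P_\beta(F_0,K_p)>0$ for $\beta>-1/2$ and $<0$ for $\beta<-1/2$, which is exactly \eqref{e:duality41}. The main obstacle is entirely in Assumption~1: establishing, rather than postulating, monotonicity of option prices in $\beta$ — and identifying its sign — is delicate, because varying $\beta$ perturbs the whole L\'evy measure and, through \eqref{e:a}, the drift as well, so standard coupling/comparison arguments do not apply directly; this is precisely what Section~5 is devoted to, and is known in full only in special families such as GH (cf.\ \citeasnoun{Bergenthum07}). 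A minor technical point to check along the way is that both $\beta$ and $-1-\beta$ are admissible, i.e.\ that the exponential-moment condition defining $\C_0$ in \eqref{e:cecero} holds for $\Pi$ and for $\tilde\Pi$; this is taken as part of the standing setup.
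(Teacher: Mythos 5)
Your proof is correct and follows essentially the same route as the paper: identify the dual market as the one with reflected parameter $\tilde\beta=-1-\beta$, use Put--Call duality together with degree-one homogeneity to rewrite the call as $(1+x)$ times a put at parameter $-1-\beta$, and then invoke Assumption~1 to compare the two puts across the reflection about $-1/2$. The only differences are cosmetic: the paper applies monotonicity to the call first and dualizes afterwards (and also records your put-side variant), and it leaves the homogeneity step implicit, whereas you make it explicit --- which is a genuine improvement in bookkeeping but not a different argument.
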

\begin{proof}

 We have that
$$\beta\gtrless-1/2\Longleftrightarrow
\beta\gtrless\widetilde{\beta}:=-\beta-1.$$ Then,
$\Pi(dy)=e^{\beta y}\Pi_0(dy)$ has
$\beta\gtrless\widetilde{\beta}$ of $\widetilde{\Pi}=e^{-(1+\beta)
y}\Pi_0(dy)$. By monotonicity
\begin{eqnarray*}
  C(F_0,K_c,r,\tau,a,\sigma,\Pi) &\gtrless& C(F_0,K_c,r,\tau,a,\sigma,\widetilde{\Pi}) \\
   &=& (1+x)P(F_0,K_c,r,\tau,a,\sigma,\Pi),
\end{eqnarray*}

were the last equality is obtained from duality and the fact that $\widetilde{\widetilde{\Pi}}=\Pi$.\\

The same can be obtained if put prices were decreasing on $\beta$,
we have: $\beta\gtrless-1/2$ implies
\begin{eqnarray*}
 (1+x)P(F_0,K_c,r,\tau,a,\sigma,\Pi) &\lessgtr& (1+x)P(F_0,K_c,r,\tau,a,\sigma,\widetilde{\Pi})\\
   &=& C(F_0,K_c,r,\tau,a,\sigma,\Pi),\;\forall x>0,
\end{eqnarray*}

\end{proof}
\begin{remark}
In the particular case of the GH distributions Assumption 1, can
be guaranteed by Th. 4.2 in \citeasnoun{Bergenthum07}.
\end{remark}
\subsection{Diffusions with jumps}
Consider the jump - diffusion model proposed by
\citeasnoun{Merton76}. The driving L\'evy process in this model
has L\'evy measure given by

\begin{equation*}
\Pi(dy)=\lambda\frac
1{\delta\sqrt{2\pi}}e^{-(y-\mu)^2/(2\delta^2)}dy,
\end{equation*}
and is direct to verify that condition \eqref{e:pisymmetric} holds
if and only if $2\mu+\delta^2=0$. This result was obtained by
\citeasnoun{Bates97} for future options, that result is obtained
as a
particular case.\\

Note that in that model $\beta=\frac {\mu}{\delta^2}$, so we
obtain that sufficient conditions can be replaced by $
\mu+\delta^2/2\gtrless 0 $, as also \citeasnoun{Bates97} found.

\section {Skewness Premium}
In order to study the sign of SK, lets analyze the following data on
S\&P500 American options in 08/31/2006 that matures in 09/15/2006
with future price $F=1303.82$. To verify if the Bates' rule holds we
need to interpolate some non-observed option prices. To this end we
use a cubic spline, as we can see in
Fig. \ref{fig:cpsp500}.\\

\begin{figure}[!htb]\label{fig:cpsp500}
  \begin{minipage}[b]{0.5\linewidth}
    \begin{center}
        \includegraphics[width=\linewidth]{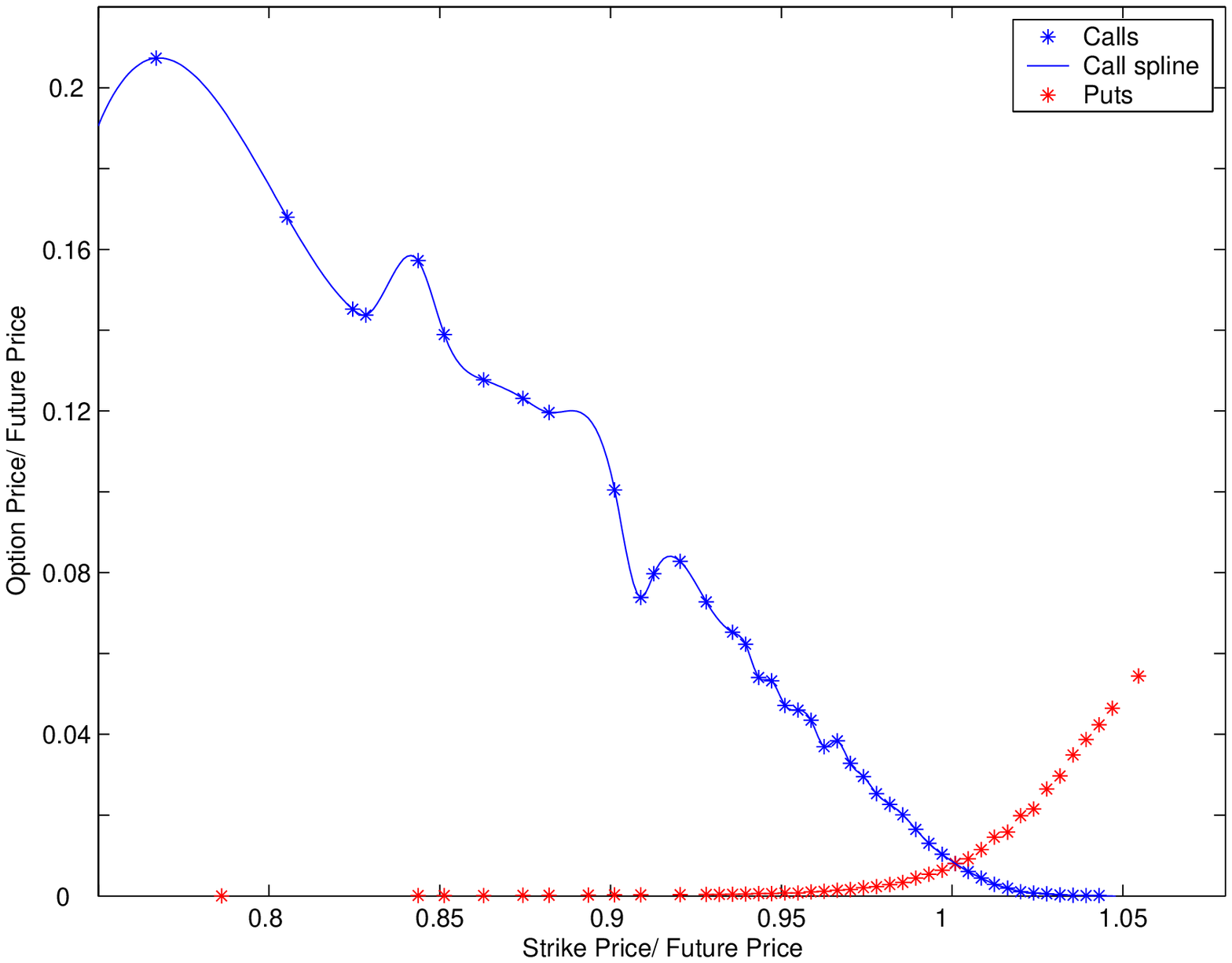}
    \end{center}
  \end{minipage}
  \begin{minipage}[b]{0.5\linewidth}
    \begin{center}
        \includegraphics[width=\linewidth]{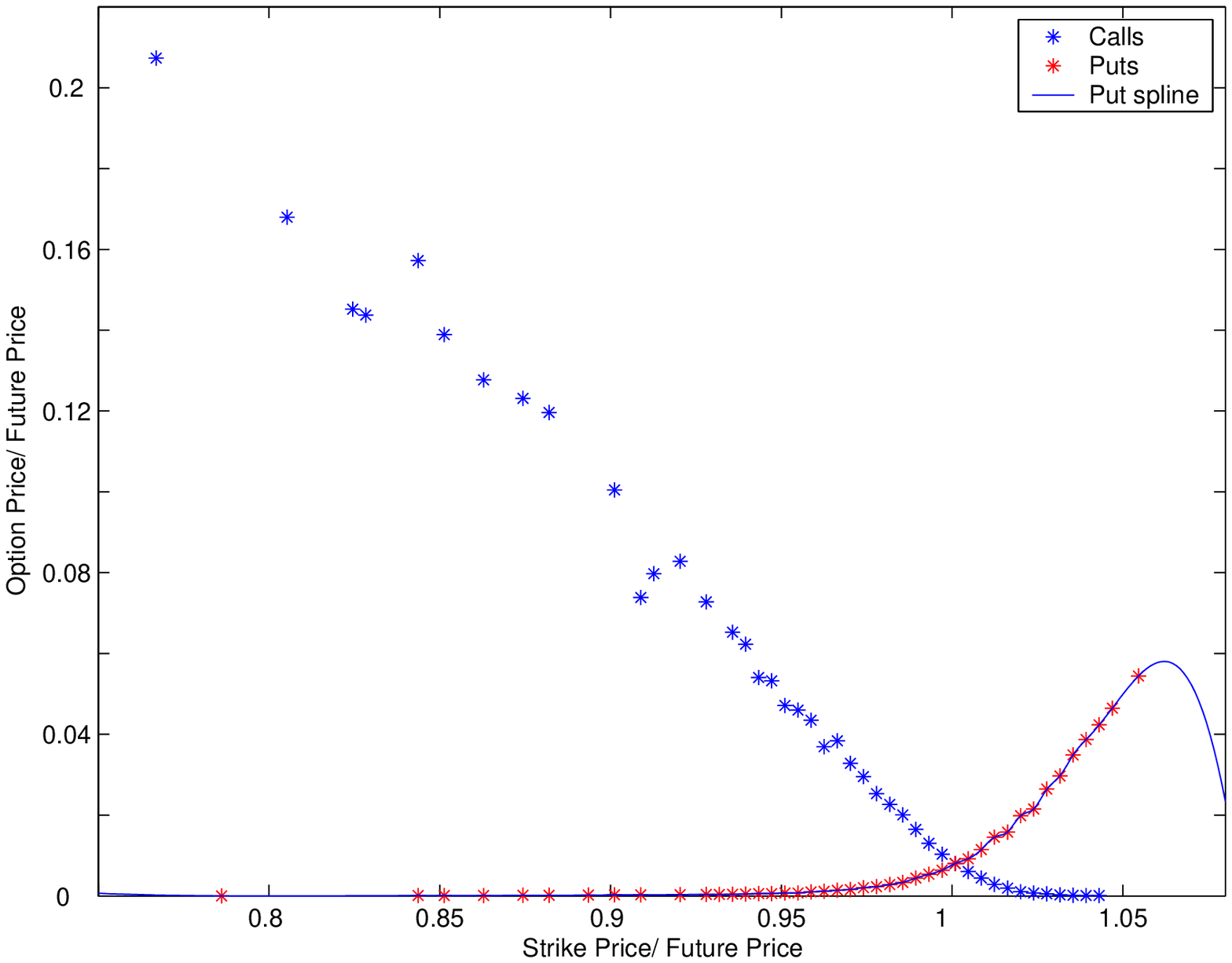}
    \end{center}
  \end{minipage} \hfill
   \caption{ Observed Call and Put prices on S\&P500 in 08/31/2006}
        \label{fig:cpsp500}
\end{figure}
\medskip

 The $x\%$ Skewness Premium is defined as the
percentage deviation of $x\%$ OTM call prices from $x\%$ OTM put
prices. The interpolating calls and put prices for the
non-observed strikes are presented in Tables 1 and 2 at the end.
We can see in both tables that this rule does not hold. Moreover,
for OTM options usually $x_{obs}<x$, what implies $\frac
{c}{p}-1<x$ and for ITM options, $x_{obs}>x$,
implying $\frac {c}{p}-1>x$.\\

\begin{table}[ht]
\begin{center}
\label{otmintput}
 \begin{tabular}{|r|r|r|r|r|} \hline
 $K_c$ & $K_p=F^2/K_c$ & $x=K_c/F-1$ &   $x_{obs}=c_{obs}/p_{int}-1$ & $ x-x_{obs}$  \\
\hline
1230 &    1382.07 &   -0.05662 &   0.050681 &    -0.1073 \\

      1235 &   1376.475 &   -0.05278 &    0.13642 &    -0.1892 \\

      1240 &   1370.925 &   -0.04895 &   0.115006 &   -0.16395 \\

      1245 &   1365.419 &   -0.04511 &   0.197696 &   -0.24281 \\

      1250 &   1359.957 &   -0.04128 &   0.277944 &   -0.31922 \\

      1255 &   1354.539 &   -0.03744 &   0.280729 &   -0.31817 \\

      1260 &   1349.164 &   -0.03361 &   0.536286 &    -0.5699 \\

      1265 &   1343.831 &   -0.02977 &   0.574983 &   -0.60476 \\

      1270 &   1338.541 &   -0.02594 &   0.606719 &   -0.63266 \\

      1275 &   1333.291 &    -0.0221 &   0.675372 &   -0.69748 \\

      1280 &   1328.083 &   -0.01827 &   0.691325 &   -0.70959 \\

      1285 &   1322.916 &   -0.01443 &   0.966306 &   -0.98074 \\

      1290 &   1317.788 &    -0.0106 &   0.904839 &   -0.91544 \\

      1295 &     1312.7 &   -0.00676 &   0.794059 &   -0.80082 \\

      1300 &   1307.651 &   -0.00293 &    0.78018 &   -0.78311 \\
     \hline
      1305 &   1302.641 &   0.000905 &   0.614561 &   -0.61366 \\

      1310 &   1297.669 &    0.00474 &   0.532798 &   -0.52806 \\

      1315 &   1292.735 &   0.008575 &   0.427299 &   -0.41872 \\

      1320 &   1287.838 &    0.01241 &   0.108911 &    -0.0965 \\

      1325 &   1282.979 &   0.016245 &   -0.11658 &   0.132826 \\

      1330 &   1278.155 &   0.020079 &   -0.45097 &   0.471053 \\

      1335 &   1273.368 &   0.023914 &   -0.50378 &   0.527697 \\

      1340 &   1268.617 &   0.027749 &   -0.61306 &   0.640807 \\

      1345 &   1263.901 &   0.031584 &   -0.73872 &   0.770305 \\

      1350 &    1259.22 &   0.035419 &   -0.81448 &   0.849896 \\

      1355 &   1254.573 &   0.039254 &   -0.80297 &   0.842224 \\

      1360 &   1249.961 &   0.043089 &   -0.82437 &   0.867454 \\

\hline
\end{tabular}
\caption{Options prices Interpolating Put prices}
\end{center}
\end{table}

\begin{table}[ht]
\begin{center}
\label{itmintcall}
\begin{tabular}{|r|r|r|r|r|}
\hline

 $K_p$ & $K_c=F^2/K_p$ & $x=F/K_p-1$ &   $x_{obs}=c_{int}/p_{obs}-1$ & $ x-x_{obs}$  \\
\hline

        1250 &   1359.957 &   0.043056 &   -0.88837 &   0.931421 \\

      1255 &   1354.539 &     0.0389 &   -0.86897 &   0.907873 \\

      1260 &   1349.164 &   0.034778 &   -0.85655 &   0.891331 \\

      1265 &   1343.831 &   0.030688 &   -0.78107 &    0.81176 \\

      1270 &   1338.541 &    0.02663 &   -0.70531 &   0.731941 \\

      1275 &   1333.291 &   0.022604 &   -0.63926 &   0.661869 \\

      1280 &   1328.083 &   0.018609 &   -0.51726 &   0.535865 \\

      1285 &   1322.916 &   0.014646 &   -0.31216 &   0.326801 \\

      1290 &   1317.788 &   0.010713 &   -0.20329 &   0.214005 \\

      1295 &     1312.7 &   0.006811 &   -0.03659 &   0.043397 \\

      1300 &   1307.651 &   0.002938 &   0.090739 &    -0.0878 \\
     \hline
      1305 &   1302.641 &    -0.0009 &   0.130843 &   -0.13175 \\

     1310 &   1297.669 &   -0.00472 &   0.252541 &   -0.25726 \\

      1315 &   1292.735 &    -0.0085 &   0.261905 &   -0.27041 \\

      1320 &   1287.838 &   -0.01226 &   0.242817 &   -0.25507 \\

      1325 &   1282.979 &   -0.01598 &   0.346419 &    -0.3624 \\

      1330 &   1278.155 &   -0.01968 &   0.183207 &   -0.20289 \\

      1335 &   1273.368 &   -0.02336 &   0.237999 &   -0.26135 \\

      1340 &   1268.617 &     -0.027 &   0.145858 &   -0.17286 \\

      1345 &   1263.901 &   -0.03062 &   0.152637 &   -0.18325 \\

      1350 &    1259.22 &   -0.03421 &   0.101211 &   -0.13542 \\

      1355 &   1254.573 &   -0.03777 &   -0.03964 &   0.001869 \\

      1360 &   1249.961 &   -0.04131 &   0.028337 &   -0.06965 \\

      1365 &   1245.382 &   -0.04482 &    -0.0101 &   -0.03472 \\

      1375 &   1236.325 &   -0.05177 &    -0.0451 &   -0.00667 \\
  \hline

\end{tabular}

\caption{Options prices Interpolating Call prices}
\end{center}
\end{table}

Then we want to know for what distributional parameter values we
can capture the observed vies in these option price ratios. To
this end we use the following definition introduced by
\citeasnoun{Bates91}.

\begin{equation}\label{sk} SK(x)=\frac
{c(S,T;X_c)}{p(S,T;X_p)}-1,\;\;\hbox{for European
Options,}\end{equation}
$$SK(x)=\frac {C(S,T;X_c)}{P(S,T;X_p)}-1,\;\;\hbox{for American
Options,}$$ where $X_p=\frac {F}{(1+x)}<F<F(1+x),\;\;x>0$.\\

The SK was addressed for the following stochastic processes:
Constant Elasticity of Variance (CEV), include arithmetic and
geometric Brownian motion. Stochastic Volatility processes, the
benchmark model  being those for which volatility evolves
independently of the asset price. And the Jump-diffusion
processes, the benchmark model is the Merton's (1976) model. For
that classes \citeasnoun{Bates96} obtained the following result.
\begin{proposition}[\citeasnoun{Bates96}]
For European options in general and for American options on
futures, the SK has the following  properties for the above
distributions.
\begin{itemize}
 \item[i)] $ SK(x)\lessgtr x$ for CEV processes with
$\rho\lessgtr 1$.
 \item[ii)] $SK(x)\lessgtr x$ for jump-diffusions with log-normal jumps depending on whether $2\mu+\delta^2\lessgtr
 0$.
  \item[iii)] $SK(x)\lessgtr x$ for Stochastic Volatility processes depending on whether $\rho_{S\sigma}\lessgtr
  0$.
\end{itemize}
 Now in equation (\ref{sk}) consider
$$X_p=F(1-x)<F<F(1+x),\;\;x>0.$$ Then,
\begin{itemize}
\item[iv)] $ SK(x)< 0$ for CEV processes only if $\rho<0$.
\item[v)] $ SK(x)\geq 0$ for CEV processes only if $\rho\geq 0$.
\end{itemize}
When $x$ is small, the two SK measures will be approx. equal.
 For in-the-money options $(x<0)$, the propositions are reversed. Calls $x\%$ in-the-money should cost
$0\%-x\%$ less than puts $x\%$ in-the-money.
\end{proposition}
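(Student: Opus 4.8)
The plan is to build every item on the same two ingredients: a \emph{symmetry point} and a \emph{monotonicity} statement. First I would, for each of the three families, identify the parameter value at which the risk neutral law of the forward return is symmetric in the sense of \eqref{e:symmetry}; at that value Put--Call symmetry holds, so the Bates' $x\%$ Rule (the Corollary following \eqref{e:pisymmetric}) gives \emph{exactly} $SK(x)=x$ — and, for the arithmetically centred strikes $X_c=F(1+x)$, $X_p=F(1-x)$ of items iv)--v), the symmetric benchmark instead becomes arithmetic Brownian motion, where $SK(x)=0$. Then I would show that the ratio $c(F,X_c)/p(F,X_p)$, equivalently $SK(x)$, is monotone in the symmetry--breaking parameter, and read off the sign of $SK(x)-x$ from the side of the symmetry point on which the parameter lies. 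For the jump--diffusion item ii) this programme is already carried out in the present paper: Merton's log--normal jump measure is of the form \eqref{e:beta} with $\beta=\mu/\delta^{2}$, so $2\mu+\delta^{2}\gtrless0\iff\beta\gtrless-1/2$, and Theorem~\ref{bergtum} (under Assumption~1) yields $C(F_0,K_c,\dots)\gtrless(1+x)P(F_0,K_p,\dots)$, i.e. $SK(x)\gtrless x$; the European statement is the case $\tau=T$.

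For the CEV item i) the symmetry point is $\rho=1$ (geometric Brownian motion, $\Pi=0$), where $SK(x)=x$. For $\rho\neq1$ I would establish that $c/p$ is increasing in $\rho$ by a comparison argument on the Dupire local--volatility equation: lowering $\rho$ enlarges the diffusion coefficient $\sigma S^{\rho}$ relative to the benchmark on $\{S<F\}$ and shrinks it on $\{S>F\}$, thinning the right tail and fattening the left tail of $\log(S_T/F)$, hence cheapening OTM calls and raising OTM puts — alternatively one differentiates the explicit non--central $\chi^{2}$ CEV pricing formulas in $\rho$. This gives $SK(x)\lessgtr x$ for $\rho\lessgtr1$. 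For iv)--v) I keep the same monotonicity in $\rho$ but move the benchmark to $\rho=0$ (arithmetic Brownian motion, symmetric for the strikes $F(1\pm x)$, so $SK(x)=0$); strict monotonicity then forces $\rho<0$ whenever $SK(x)<0$ and $\rho\geq0$ whenever $SK(x)\geq0$. For the stochastic--volatility item iii) the symmetry point is zero leverage $\rho_{S\sigma}=0$: by the Hull--White mixing representation $\log(S_T/F)$ is then a variance mixture of centred normals, symmetric about the origin, so Put--Call symmetry holds and $SK(x)=x$; for $\rho_{S\sigma}\neq0$ I would sign $\partial(c/p)/\partial\rho_{S\sigma}$ through the mixing formula (a vega--type sensitivity) to get that $c/p$ is increasing in $\rho_{S\sigma}$, whence $SK(x)\lessgtr x$ for $\rho_{S\sigma}\lessgtr0$. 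The two trailing remarks are routine: $F/(1+x)-F(1-x)=Fx^{2}/(1+x)=O(x^{2})$, so the two SK measures differ by $O(x^{2})$ and agree to first order in $x$; and replacing $x>0$ by $-|x|$ interchanges the OTM and ITM regions, reversing every inequality, which is the in--the--money statement.

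The step I expect to be the real obstacle is the monotonicity in the symmetry--breaking parameter for the two non--L\'evy families. For CEV one needs a genuinely clean comparison principle for the local--volatility PDE (or a careful sign analysis of the derivative of the $\chi^{2}$ pricing formula in $\rho$), and for stochastic volatility one needs to control the sign of $\partial(c/p)/\partial\rho_{S\sigma}$ \emph{uniformly} in the moneyness $x>0$ — precisely the kind of monotonicity the Introduction flags as not fully settled, and which here cannot be imported from the L\'evy--process duality machinery used elsewhere in the paper.
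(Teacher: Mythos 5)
The paper does not prove this proposition at all: its ``proof'' is the single line ``See \citeasnoun{Bates91}'', so the statement is imported as a known result of Bates and there is no argument in the paper to compare yours against. Judged on its own merits, your reconstruction has the right architecture --- identify the parameter value at which the risk--neutral return law is symmetric (so that Put--Call symmetry gives $SK(x)=x$ for the geometric strikes, or $SK(x)=0$ for the arithmetic strikes of iv)--v)), then sign $SK(x)-x$ by monotonicity of $c/p$ in the symmetry--breaking parameter --- and this is indeed the logic Bates uses and the logic the paper itself later recycles in Theorem \ref{bergtum}. Your treatment of item ii) via $\beta=\mu/\delta^{2}$ and Theorem \ref{bergtum} is consistent with the paper, with the caveat that it imports Assumption 1 (monotonicity in $\beta$), which is an unproved hypothesis of the paper and not part of Bates' original, self--contained jump--diffusion argument; so even there you have replaced a proof by a conditional statement.

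The genuine gap is the one you flag yourself, and it is not a minor technicality: for CEV and for stochastic volatility the monotonicity of $c(F,X_c)/p(F,X_p)$ in $\rho$, respectively in $\rho_{S\sigma}$, \emph{uniformly over the relevant strikes}, is asserted by a tail--thickness heuristic rather than proved. A comparison principle for the Dupire equation controls the sign of $c(\rho)-c(\rho')$ for a \emph{single} convex payoff when one volatility surface dominates the other everywhere; it does not by itself order the \emph{ratio} of a call at one strike to a put at a different strike, which is what $SK$ is. Likewise, signing $\partial(c/p)/\partial\rho_{S\sigma}$ through the Hull--White mixing representation requires controlling a correlation--sensitivity whose sign is known to depend on moneyness and maturity in general stochastic volatility models (the paper's own footnote on convexity preservation failing for general SV models is a warning here). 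So items i), iii), iv), v) are not established by your argument; they rest on exactly the kind of unproven monotonicity that the paper elevates to Assumption 1 in the L\'evy case. To close the gap you would either have to reproduce Bates' direct distributional calculations for each family or prove the monotonicity claims, neither of which is done in the proposal or in the paper.
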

\begin{proof}
See \citeasnoun{Bates91}.
\end{proof}
Now using Th. \ref{bergtum}, we can extend Bates' result to Lévy
processes.
\begin{proposition}
Consider Lévy measures given by (\ref{e:beta}). Under Assumption
1, we have
  $$\beta\gtrless-1/2 \Rightarrow SK(x)\gtrless x ,\;\; x>0.$$
\end{proposition}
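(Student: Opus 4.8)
The plan is to obtain this proposition as an immediate consequence of Theorem~\ref{bergtum}, by rewriting the inequality \eqref{e:duality41} in terms of the ratio of option prices that defines the skewness premium.

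First I would unwind the definition \eqref{sk}. For $x>0$ one takes $X_c=(1+x)F_0$ and $X_p=F_0/(1+x)$, so that
\[
SK(x)=\frac{C(F_0,X_c,r,\tau,\psi)}{P(F_0,X_p,r,\tau,\psi)}-1 .
\]
Since $x>0$ we have $X_p=F_0/(1+x)<F_0$, so the put is struck strictly below the forward price and $P(F_0,X_p,r,\tau,\psi)>0$; hence the ratio in \eqref{sk} is well defined and, multiplying the inequality through by this positive denominator,
\[
SK(x)\gtrless x
\;\Longleftrightarrow\;
\frac{C(F_0,X_c,r,\tau,\psi)}{P(F_0,X_p,r,\tau,\psi)}\gtrless 1+x
\;\Longleftrightarrow\;
C(F_0,X_c,r,\tau,\psi)\gtrless (1+x)\,P(F_0,X_p,r,\tau,\psi).
\]

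Next I would match notation: the strikes $X_c$ and $X_p$ above are precisely the quantities $K_c=(1+x)F_0$ and $K_p=F_0/(1+x)$ appearing in Theorem~\ref{bergtum}. Therefore, for a L\'evy measure of the form \eqref{e:beta} and under Assumption~1, Theorem~\ref{bergtum} asserts exactly that $\beta\gtrless-1/2$ implies $C(F_0,K_c,r,\tau,\psi)\gtrless(1+x)\,P(F_0,K_p,r,\tau,\psi)$. Combining this with the chain of equivalences of the previous paragraph gives $\beta\gtrless-1/2\Rightarrow SK(x)\gtrless x$ for every $x>0$, which is the assertion; the identical argument, using the lower-case $c,p$ version of \eqref{sk}, handles the European case.

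There is essentially no obstacle at this level of the argument: all the substance --- the monotonicity of option prices in the symmetry parameter $\beta$ (Assumption~1, or Th.~4.2 in \citeasnoun{Bergenthum07} for the GH family) together with the duality identity $C(\,\cdot\,,\widetilde\Pi)=(1+x)P(\,\cdot\,,\Pi)$ and the involution $\widetilde{\widetilde\Pi}=\Pi$ --- has already been absorbed into Theorem~\ref{bergtum}. The only points deserving a line of care are the strict positivity of the denominator $P(F_0,X_p,r,\tau,\psi)$, which is what makes the ratio in \eqref{sk} meaningful and the inequalities clearable, and a check that the two-sided symbol $\gtrless$ is read with a consistent orientation in \eqref{e:duality41} and in the present statement; both are routine.
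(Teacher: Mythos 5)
Your proposal is correct and is exactly the argument the paper intends: the proposition is stated as an immediate corollary of Theorem~\ref{bergtum}, obtained by dividing the inequality $C(F_0,K_c,r,\tau,\psi)\gtrless(1+x)P(F_0,K_p,r,\tau,\psi)$ by the (positive) put price and recognizing the resulting ratio as $1+SK(x)$ with $K_c=X_c$ and $K_p=X_p$. The paper omits the proof entirely, so your explicit write-up of the equivalence $SK(x)\gtrless x \Leftrightarrow C\gtrless(1+x)P$ is, if anything, slightly more careful than the original.
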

And from here we obtain the sign of the excess of skewness premium
for a huge class of Lévy processes.

 \section{Monotonicity and
Symmetry Parameter} As we have seen in the last section we need
the monotonicity of option prices with respect to the symmetry
parameter to obtain our main result. The literature had study
extensively the monotonicity properties of option prices. The main
idea is to exploit the {\it convexity preserving
property}\footnote{We say that a model is {\it convexity
preserving}, if for any convex contract function, the
corresponding price is convex as a function of the price  of the
underlying asset at all times prior to maturity. Many models do
not satisfy this property as for example general stochastic
volatility models.}, to obtain the monotonicity of option prices
with respect to certain
 parameter of the model. See \citeasnoun{BGW96}, \citeasnoun{ElkarouiJS98} and \citeasnoun{EkstromTysk05}. \\

 On the other hand, this question is very related to the
 ordering of option prices by changing the equivalent martingale measure. That is, imposing conditions on
 the predictable characteristic of the underlying process, an ordering in option prices with respect to the equivalent
 martingale measures is established, see \citeasnoun{Bellamy00}, \citeasnoun{HendersonHobson03}, \citeasnoun{Henderson05}, \citeasnoun{Jakubenas2002}, \citeasnoun{GuschinMordecki02} and \citeasnoun{Bergenthum06}. \\

 But we are interested in the possible
mispecification in the models when using a fixed equivalent
martingale measure. That is, if we change the parameter $\beta$ on
the Lévy measure described by (\ref{e:beta}) what happen with the
option price. To answer partially that question, we can apply
Lemma 5.1 in \citeasnoun{GuschinMordecki02} for a certain group of
Lévy measures (\ref{e:beta}), that is if the Lévy measure  satisfy
the assumptions of that lemma, we obtain and order in option
prices. Then, Assumption 1 will be satisfied.
\section{Conclusions}
Under a given risk neutral probability measure. We use a measure
of \emph{symmetry}, introduced by
\citeasnoun{FajardoMordecki2006b}, to address the Skewness premium
under absence of symmetry. First, we analyze the sign of the
Skewness premium using data
 from S\&P500  and we obtain evidence that Bates' $x\%$ rule does not hold. In that
 case we derive sufficient conditions for excess of SK to be positive or negative. In particular on the
 symmetry parameter. In this way we obtain simply diagnostic to
 observe what L\'evy model deals with both the behavior of the
 underlying and with the sign of SK.\\

 Interesting issues to study in future works are the empirical evidence of Assumption 1 and under what conditions, on the symmetry parameter, the monotonicity of option prices with respect to
  symmetry parameter holds. In that sense the results obtained by
  \citeasnoun{Bergenthum07} for the GH distributions can bring some insights.
\bibliographystyle{econometrica}
\bibliography{skewness241008}
\end{document}